\newcommand{\NN}{\mathbb{N}}
\newcommand{\ZZ}{\mathbb{Z}}
\newcommand{\p}{\mathcal{P}}
\theoremstyle{definition}
\newtheorem{theo}{Theorem}[section]
\newtheorem{rmk}[theo]{Remark}
\newtheorem{ex}[theo]{Example}
\newtheorem{lem}[theo]{Lemma}
\newtheorem{defn}[theo]{Definition}
\newtheorem{prop}[theo]{Proposition}
\date{}
\begin{document}
\title{Bratteli-Vershik models for partial actions of $\ZZ$}
\author{Thierry Giordano\thanks{Research supported by NSERC, Canada.}, Daniel Gon\c calves\thanks{Partially supported by CNPq, Brazil.}, Charles Starling\thanks{Supported by the NSERC grants of Beno\^it Collins, Thierry Giordano, and Vladimir Pestov.}}

\maketitle

\begin{abstract}
Let $U$ and $V$ be open subsets of the Cantor set with finite disjoint complements, and let $h:U\to V$ be a homeomorphism with dense orbits. Building on the ideas of Herman, Putnam, and Skau, we show that the partial action induced by $h$ can be realized as the Vershik map on a Bratteli diagram, and that any two such diagrams are equivalent.
\end{abstract}

\section{Introduction}

A Bratteli-Vershik model is a combinatorial realization of a dynamical system formed by a map (now called the {\em Vershik map}) acting on the path space of an infinite graph called a {\em Bratteli diagram}. Such models were introduced in the study of measurable dynamics by Vershik \cite{V81, V82} who showed that any ergodic automorphism has such a model. Building on the work of Vershik, an analogous result was obtained in the topological setting by Herman, Putnam and Skau: any minimal homeomorphism of the Cantor set has a Bratteli-Vershik model, see \cite{HPS}. The ideas in \cite{HPS} led to the classification of Cantor minimal systems up to orbit equivalence, see \cite{GMPS, GMPS10,GPS1, GPS04}. Also, other important results in the topological setting were obtained: in \cite{GjerdeJohansend} Bratteli-Vershik models were developed for interval exchange transformations and used to study conjugation of these system as well as orbit equivalence to Sturmian systems. Substitutional dynamical systems were studied through their Bratteli-Vershik model in \cite{DuranHostSkau} and Bratteli-Vershik models for Cantor aperiodic systems were developed in \cite{BDM, Medynets}. Very recently a category structure for ordered Bratteli diagrams was proposed such that isomorphism in this category coincides with Herman, Putnam, and Skau's notion of equivalence, see \cite{AminiElliottGolestani}.

As defined, the Vershik map is naturally only a partially defined map. The edges of the Bratteli diagram are given a partial order which extends to a partial order on the finite paths beginning at the root, and the Vershik map sends an infinite path to its successor induced by this ordering. This map is only defined on the paths with at least one nonmaximal edge, which is an open subset of the space of infinite paths. In many of the works above, the constructed model can be arranged to produce only one maximal path and one minimal path, and the Vershik model can hence be extended to a globally defined homeomorphism by sending the maximal path to the minimal path. 

Of course, there are examples of ordered Bratteli diagrams which have a different number of maximal and minimal paths (see \cite{BKY14, BY13}, also see Example \ref{2max1min}) -- in this case extending to a globally defined homeomorphism is impossible. In cases such as this, we still have that the Vershik map is a homeomorphism between open subsets of the path space, and we can hence study it as a {\em partial action} of $\ZZ$ (the Vershik map has been studied as a partially defined map before in the literature, see \cite{GoncalvesRoyer, Mingo}).

Partial actions were originally defined in \cite{Exel} and gradually gained importance as many C*-algebras and algebras were realized as partial crossed products (approximately finite, Bunce-Deddens and Cuntz-Krieger algebras, graph algebras, Leavitt path algebras, algebras associated with integral domains, and self-similar graph algebras among others, see \cite{ExelBoava, KarlsenLarsen, Exel1, Exel2, Exel3, ES16, GoncalvesRoyer1}). In the topological category a partial action of $\ZZ$ provides the correct setting to study the dynamics of a partial homeomorphism: given a homeomorphism $h:U\rightarrow V$ between two open sets of the topological space $X$ one considers the iterates $h^n$, with $n\in \ZZ$, restricted to the appropriate domains. This is the approach taken in \cite{Exel, Exel4}. 

It is our goal in this paper to link the theory of minimal partial actions of $\ZZ$ on the Cantor set with Bratteli diagrams. Our main result, Theorem \ref{maintheo} states that any minimal homeomorphism between open subsets of the Cantor set (whose complements are finite and disjoint) has a Bratteli-Vershik model. In our model the points in the complement of the open sets are identified with maximal and minimal paths in the Bratteli diagram. 

We divide our work in three sections. In Section 2 we recall background material on Bratteli diagrams and partial actions. In Section 3 we construct our Bratteli-Vershik model by first carefully developing a suitable ``first return time'' (Proposition \ref{returntime}), using that to construct an ordered Bratteli diagram (Proposition \ref{KRsequence}), and showing that the resulting Vershik map is isomorphic to our original homeomorphism (Theorem \ref{maintheo}).

\section{Background}

\subsection{Bratteli diagrams}

In this section we recall relevant concepts regarding Bratteli diagrams. We begin with the definition, as introduced by Bratteli in \cite{B72}

\begin{defn}
A {\em Bratteli diagram} is an infinite directed graph $(V,E)$. The vertex set $V$ is the union of a sequence of finite, nonempty, pairwise disjoint sets $V_n$, $n\geq 0$. The set $V_0$ is assumed to consist of a single vertex $v_0$ called the {\em root}. Similarly, the edge set is the union of a sequence of finite, nonempty, pairwise disjoint sets, $E_n$, $n\geq 0$. Moreover, we have maps $r,s:E\rightarrow V$, called {\em range} ($r$) and {\em source} ($s$), such that $r(E_n) = V_n$ and $s(E_n)= V_{n-1}$, $n=1,2,3,\ldots$. The graph is always assumed to have no sources other than $v_0$ and no sinks, that is, $s^{-1}(v)$ and
$r^{-1}(v)$ are non-empty for any $v$ in $V$ (other than $r^{-1}(v_0)$). 
\end{defn}

Let $B = (V, E)$ be a Bratteli diagram, and let $E^*$ be the set of all finite paths in $B$, including the vertices (treated as paths of length zero). That is,

\[
E^* = \{e_1e_2\cdots e_k\mid e_i\in E, r(e_i) = s(e_{i+1})\text{ for }1\leq i\leq k\}\cup V.
\]
The range and source can be extended to $E^*$ by setting
\[
r(e_1e_2\cdots e_k) = r(e_k),\hspace{1cm} s(e_1e_2\cdots e_k) = s(e_1),\hspace{1cm} r(v) = s(v) = v\text{ for all } v\in V.
\]
For $v, w\in V\cup E$, let $vE^*$ denote all the paths starting with $v$, let $E^*w$ be all the paths ending with $w$, and let $vE^*w$ be all the paths starting with $v$ and ending with $w$.

Following \cite[Definition 2.2]{HPS}, for integers $l>k>0$, let 
\[
P_{k,l} = \bigcup_{v\in V_k, w\in V_l}vE^*w.
\]
Then given a strictly increasing sequence $m_n$ of nonnegative integers with $m_0 = 0$, the {\em contraction} of $(V,E)$ to $(m_n)_{n\geq 0}$ is the Bratteli diagram $(V', E')$ where $V'_n = V_{m_n}$, $E_n' = P_{m_{n-1},m_n}$, and $r$ and $s$ are as defined above. 

\begin{defn} \label{equivalentBratteli}(cf \cite[Definition 2.2]{HPS}) Two Bratteli diagrams $(V,E)$ and $(V',E')$ are said to be {\em isomorphic} if there exists a pair of bijections between $V$ and $V'$ and between $E$ and $E'$ preserving the gradings and intertwining the respective range and source maps. We let $\sim$ denote the equivalence relation generated by isomorphism and contraction, and we say that $(V,E)$ and $(V',E')$ are {\em equivalent} if $(V,E)\sim(V',E')$.
\end{defn}

We now recall the notion of an ordered Bratteli diagram.

\begin{defn} An ordered Bratteli diagram $(V, E, \geq)$ is a Bratteli diagram $(V,E)$ together with a partial order $\geq$ on $E$ so that edges $e$ and $e'$ are comparable if and only if $r(e)=r(e')$. 
\end{defn}

If $B= (V, E, \geq)$ is an ordered Bratteli diagram, then for any integers $l>k>0$, $v\in V_l,  w\in V_k$, the set $vEw$ is linearly ordered under the lexicographic ordering. Hence if $(V', E')$ is a contraction of $(V,E)$, it becomes an ordered Bratteli diagram in the natural way, and the notion of equivalence in Definition \ref{equivalentBratteli} extends to ordered Bratteli diagrams.

Since $r^{-1}(v)$ is linearly ordered for each $v\in V\setminus V_0$, each has a maximal element and a minimal element. We let $E_n^{\max}$ (resp. $E^{\min}_n$) denote the set of maximal (resp. minimal) edges at level $n$, and let $E^{\max}$ (resp. $E^{\min}$) denote the set of all maximal (resp. minimal) edges. We can now describe the partial Bratteli-Vershik system associated to an ordered Bratteli diagram, as in \cite{GoncalvesRoyer}.

Let $B = (V,E, \geq)$ be an ordered Bratteli diagram. We define the infinite path space associated to the diagram, denoted by $X_B$, as the following compact subspace (with the product topology) of $\prod\limits_{n=1}^\infty E_n$: 

$$X_B=\left\{\xi\in\prod\limits_{n=1}^\infty E_n\middle| r(\xi_j)=s(\xi_{j+1})\text{ for each  }j\geq 1\right\}.$$

Let $X_1:=X\setminus X_B^{\min}$ and $X_{-1}:=X\setminus X_{\max}$, where

$$X_B^{\max}=\{\xi\in X\mid\xi_j\in E_j^{\max},\ \forall j\geq 1\}$$ 
and 
$$ \ X_B^{\min}=\{\xi\in X\mid\xi_j\in E_j^{\min}, \ \forall j\geq 1\}.$$

To define $\lambda:X_{-1}\rightarrow X_1$ we proceed in the following way: given $\xi\in X_{-1}$, let $n$ be such that $\xi_i \in E_i^{\max}$, for $1\leq i \leq n-1$, and $\xi_n\notin E_n^{\max}$ and define $\lambda(\xi)=g_1...g_{n-1}g_n\xi_{n+1}\xi_{n+2}...\in X_B$, where $g_n$ is the successor of $\xi_n$ and $g_1...g_{n-1}g_n$ is the unique path such that $g_j\in E_j^{\min}$ for each $j\in\{1,...,n-1\}$. We note that $\lambda:X_{-1}\rightarrow X_1$ is a homeomorphism and that this definition does not depend on the number of elements of $X_B^{\max}$ or $X_B^{\min}$. In the sequel, we will call $\lambda$ the {\em Vershik map} associated to $B$.

\subsection{Partial Actions}

\begin{defn} A partial action of a group $G$ on a set $\Omega$ is a pair $\theta=(\{\Delta_{t}\}_{t\in G},$ $\{\theta_{t}\}_{t\in G})$, where for each $t\in G$, $\Delta_{t}$ is a subset of $\Omega$ and $\theta_{t}:\Delta_{t^{-1}} \rightarrow \Delta_{t}$ is a bijection such that:
\begin{enumerate}
	\item $\Delta_{e} = \Omega$ and $\theta_{e}$ is the identity map on $\Omega$;
	\item $\theta_{t}(\Delta_{t^{-1}} \cap \Delta_{s})=\Delta_{t} \cap \Delta_{ts}$ for all $s, t\in G$;
	\item  $\theta_{t}(\theta_{s}(x))=\theta_{ts}(x)$ for all $x \in \Delta_{s^{-1}} \cap \Delta_{s^{-1} t^{-1}}$ and $s, t\in G$.
\end{enumerate}

If $\Omega$ is a topological space, we also require that each $\Delta_{t}$ is an open subset of $\Omega$ and that each $\theta_{t}$ is a homeomorphism of $\Delta_{t^{-1}}$ onto $\Delta_{t}$. 

Analogously, a pair $\theta = (\{ D_{t} \}_{t \in G} , \{ \theta_{t} \}_{t \in G} )$ is a partial action of $G$ on a C*-algebra $A$ if each $D_{t}$ is a closed two sided ideal and each $\theta_{t}$ is a *-isomorphism of $D_{t^{-1}}$ onto $D_{t}$.
\end{defn}

In this work we consider a partial action of $\ZZ$ which is obtained by iteration of a single partially defined homeomorphism as follows. 

\begin{prop}\label{theimportantexample} Let $X$ be a locally compact Hausdorff space, let $U$ and $V$ open subsets of $X$ and let $h$ a homeomorphism from $U$ to $V$. Let $X_{-n}= \text{dom} (h^n)$. Then $\theta=(\{X_{n}\}_{n\in \ZZ},\{h^n\}_{n\in \ZZ})$ is a partial action of $\ZZ$. 
\end{prop}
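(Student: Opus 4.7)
The plan is to work from the inductive definition of the iterates and check the three axioms of a partial action, treating carefully the fact that $h$ is only partially defined. First I would fix notation: set $\theta_0=\mathrm{id}_X$, and for $n\geq 1$ define $\theta_n=h\circ\theta_{n-1}$ and $\theta_{-n}=h^{-1}\circ\theta_{-(n-1)}$, each taken on its maximal domain. An easy induction shows that the domain $X_{-n}$ of $\theta_n$ is open (being the preimage of an open set by a homeomorphism between open sets), that $\theta_n(X_{-n})=X_n$, and that $\theta_n\colon X_{-n}\to X_n$ is a homeomorphism with inverse $\theta_{-n}$. Axiom (1) is immediate from $\theta_0=\mathrm{id}_X$.

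For axiom (3), the associativity $\theta_t(\theta_s(x))=\theta_{ts}(x)$ on $\Delta_{s^{-1}}\cap\Delta_{s^{-1}t^{-1}}$, I would proceed by a double induction on $|s|$ and $|t|$, using the defining relations $\theta_n=h\circ\theta_{n-1}$ and $\theta_{-n}=h^{-1}\circ\theta_{-(n-1)}$. The hypothesis $x\in\Delta_{s^{-1}}\cap\Delta_{s^{-1}t^{-1}}$ is exactly the statement that both $\theta_s(x)$ and $\theta_{ts}(x)$ are defined, so after reducing to the case where $s,t$ have the same sign (by cancelling an $h\circ h^{-1}$ when they disagree) the identity becomes the tautological fact that applying $h$ (or $h^{-1}$) one more time commutes with itself.

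Axiom (2), $\theta_t(\Delta_{t^{-1}}\cap\Delta_s)=\Delta_t\cap\Delta_{ts}$, is the step that requires the most care and will be the main obstacle. I would prove the two inclusions separately. For the inclusion ``$\subseteq$'', take $x\in X_{-n}\cap X_m$; then $y=\theta_n(x)$ lies in $X_n=\theta_n(X_{-n})$, and the identity $\theta_{-(n+m)}(y)=\theta_{-m}(x)$ (which is an instance of (3), already established) together with $x\in X_m=\mathrm{dom}(\theta_{-m})$ shows $y\in\mathrm{dom}(\theta_{-(n+m)})=X_{n+m}$. For the inclusion ``$\supseteq$'', one takes $y\in X_n\cap X_{n+m}$, writes $x=\theta_{-n}(y)$, and checks analogously that $x\in X_{-n}\cap X_m$. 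The bookkeeping splits according to the signs of $n$, $m$, and $n+m$, but in every case it reduces to (3) applied to a string that is already in the common domain.

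Finally, since each $\theta_n$ is a homeomorphism between the open sets $X_{-n}$ and $X_n$, the continuity hypotheses from the topological part of the definition are built in, and the pair $(\{X_n\}_{n\in\ZZ},\{h^n\}_{n\in\ZZ})$ is a partial action of $\ZZ$ on $X$. The main technical nuisance I anticipate is not any deep idea but the sign-case analysis in axiom (2); organising the proof so that (3) is proven first, and then used as a lemma for (2), is what keeps the argument clean.
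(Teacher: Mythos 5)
Your argument is correct, but it is worth noting that the paper does not actually prove this proposition: its ``proof'' consists entirely of the citation \emph{See [Section 3, Exel]}, so you are supplying the direct verification that the authors outsource to Exel's original paper on partial automorphisms. Your route --- define the iterates inductively on maximal domains, prove the cocycle identity (axiom (3)) first in the strong form, and then derive the domain identity $h^n(X_{-n}\cap X_m)=X_n\cap X_{n+m}$ (axiom (2)) from it --- is a sound and reasonably economical organisation of the bookkeeping. One point you should make fully explicit: axiom (3) as stated in the paper presupposes that $\theta_s(x)$ lies in $\Delta_{t^{-1}}$, a fact which in a general partial action is supplied by axiom (2); since you prove (3) before (2), you must (and, via the cancellation of $h\circ h^{-1}$ pairs in the mixed-sign case, implicitly do) establish the stronger statement that for $x\in\Delta_{s^{-1}}\cap\Delta_{s^{-1}t^{-1}}$ the composite $\theta_t(\theta_s(x))$ is actually \emph{defined}, not merely equal to $\theta_{ts}(x)$ when it happens to be. With that definedness claim stated explicitly (it follows from the nesting $\mathrm{dom}(h^{n})\subseteq\mathrm{dom}(h^{m})$ for $0\le m\le n$ or $n\le m\le 0$, together with the cancellation identities $h^{-1}(h(x))=x$ on $U$ and $h(h^{-1}(y))=y$ on $V$), your deduction of (2) from (3) closes cleanly, and the topological requirements are automatic since each $\theta_n$ is by construction a homeomorphism between open sets.
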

\begin{proof}
See \cite[Section 3]{Exel}.
\end{proof}
\begin{defn}\label{abbreviation}
Let $X$ be a locally compact Hausdorff space, let $U$ and $V$ open subsets of $X$ and let $h$ a homeomorphism from $U$ to $V$. Then we let $(X, h)$ denote the induced partial action of $\ZZ$ from Proposition \ref{theimportantexample}.
\end{defn}
Suppose that we have $h:U\to V$ as in Proposition \ref{theimportantexample}. Let 
\[
O(x) := \{ h^n(x)\mid n\in \ZZ, h^n(x)\text{ is defined}\};
\]
this is called the {\em orbit} of $x$. We say that $h$ is {\em minimal} if the orbit of $x$ is dense in $X$ for all $x\in X$. We give two examples of minimal partial homeomorphisms.

\begin{ex} {\bf The Odometer}

Let $X=\{0,1\}^{\infty}=\displaystyle \prod_{\NN}\{0,1\}$. Let $\max=1^\infty$ (the sequence of all 1´s), $\min=0^\infty$ (sequence of all 0´s), $X_{-1}=X\setminus\{\max\}$, $X_1=X\setminus\{\min\}$ and $h:X_{-1} \rightarrow X_{1}$ be addition of 1 with carryover to the right. Then $\theta=(\{X_{n}\}_{n\in \ZZ},\{h^n\}_{n\in \ZZ})$, where $X_{-n}= \text{dom}( h^n)$, is a topological partial action. It is straightforward to check that $h$ is minimal. We note that in this case the partial action can be extended to a global action by setting $h(\max) = \min$. 
\end{ex}

\begin{ex}\label{2max1min}
Consider the following ordered Bratteli diagram $B = (V, E, \geq)$:

\vspace{1cm}
\begin{center}
\begin{tikzpicture}[scale=2]
\draw (0,0) -- (0,1) node [left,midway]{1};
\draw (0.1, 0) -- (0.1, 1) node [right,midway]{3}; 
\draw (1, 0) -- (1, 1) node [right,midway]{5};
\draw (0.1, 0) -- (1,1)node [below, near start]{2};
\draw (1,0) -- (0.1,1) node [below, near start]{4};

\draw (0,1) -- (0,2 )node [left,midway]{1};
\draw (0.1, 1) -- (0.1,2) node [right,midway]{3}; 
\draw (1, 1) -- (1, 2)node [right,midway]{5};
\draw (0.1, 1) -- (1,2)node [below, near start]{2};
\draw (1,1) -- (0.1,2)node [below, near start]{4};

\draw (0.05, 2) -- (0.55, 2.5);
\draw (1, 2) -- (0.55, 2.5);

\node at (0.05, 1) [shape=circle, fill, inner sep = 0pt, minimum size =2mm]{};
\node at (0.05, 2) [shape=circle, fill, inner sep = 0pt, minimum size =2mm]{};
\node at (0.05, 0) [shape=circle, fill, inner sep = 0pt, minimum size =2mm]{};

\node at (0.55, 2.5) [shape=circle, fill, inner sep = 0pt, minimum size =2mm]{};

\node at (1, 1) [shape=circle, fill, inner sep = 0pt, minimum size =2mm]{};
\node at (1, 2) [shape=circle, fill, inner sep = 0pt, minimum size =2mm]{};
\node at (1, 0) [shape=circle, fill, inner sep = 0pt, minimum size =2mm]{};

\node at (0.5, -0.25) {$\vdots$};
\end{tikzpicture}
\end{center}

The ordering is given by $1\leq 2\leq 3$ and $4\leq 5$, and the diagram and ordering repeat at each level. Here, $X_B^{\max} = \{3^\infty, 5^\infty\}$ and $X_B^{\min} = \{1^\infty\}$. In contrast with the previous example, we cannot extend the Vershik map to all of $X_B$, because $X_B^{\max}$ and $X_B^{\min}$ have different cardinalities. It is also easy to check that $\lambda$ is minimal.
\end{ex}

\begin{rmk} More examples of partial actions can be obtained from full actions. In fact, in \cite{Abadie} it is shown that every partial action on a topological space can be obtained as a restriction of a full action on an envelope space (criteria for when this envelope space is Hausdorff are given in \cite{GiordanoGoncalvesExel}). 
\end{rmk}

\begin{defn}\label{morphism}(cf \cite[Definition 1.1]{Abadie}) Let $\alpha=(\{X_{t}\}_{t\in G},$ $\{\alpha_{t}\}_{t\in G})$ and $\beta=(\{Y_{t}\}_{t\in G},$ $\{\beta_{t}\}_{t\in G})$ be two partial actions of $G$ on the spaces $X$ and $Y$ respectively. Then a {\em morphism} $\phi: \alpha \rightarrow \beta$ is a continuous function $\phi:X \rightarrow Y$ such that for every $t\in G$, $\phi(X_t)\subseteq Y_t$ and the restriction of $\beta_t \circ \phi$ to $X_{t^{-1}}$ equals $\phi\circ \alpha_t$. 
\end{defn}

We now give a criterion for partial actions as in Proposition \ref{theimportantexample} to be isomorphic.

\begin{prop}\label{isopa} Let $X, Y$ be locally compact Hausdorff spaces, $U, V$ be open subsets of $X$, and $W, Z$ be open subsets of $Y$. Let $\alpha$ be a homeomorphism from $U$ to $V$ and $\beta$ be a homeomorphism from $W$ to $Z$. If there exists a homeomorphism $\phi: X\rightarrow Y$ such that $\phi(U) = W$ and $\phi \circ \alpha = \beta \circ \phi$ in $U$ then $(X, \alpha)$ and $(Y, \beta)$ (as in Definition \ref{abbreviation}) are isomorphic. 
\end{prop}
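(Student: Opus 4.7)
The goal is to verify that $\phi$ itself is a morphism of partial actions from $(X,\alpha)$ to $(Y,\beta)$ in the sense of Definition \ref{morphism}, and that its inverse $\phi^{-1}$ is a morphism in the opposite direction. Since $\phi$ is already given to be a homeomorphism, once both directions are morphisms the composition with itself gives the identity, so this yields an isomorphism of partial actions.

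The only substantive work is to extend the intertwining hypothesis from the single step $n=1$ to every $n \in \ZZ$. Writing $X_{-n} = \mathrm{dom}(\alpha^n)$ and $Y_{-n} = \mathrm{dom}(\beta^n)$, I would proceed by induction on $|n|$, proving simultaneously that $\phi(X_{-n}) = Y_{-n}$ and that $\phi \circ \alpha^n = \beta^n \circ \phi$ on $X_{-n}$. The base case $n=0$ is trivial since $\alpha^0$ and $\beta^0$ are identities, and the case $n=1$ is exactly the hypothesis (using $\phi(V)=\phi(\alpha(U)) = \beta(\phi(U)) = \beta(W) = Z = Y_1$). For the inductive step, recall that $X_{-(n+1)} = \{x \in X_{-n} : \alpha^n(x) \in U\}$. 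Using the inductive identity $\phi \circ \alpha^n = \beta^n \circ \phi$ and the fact that $\phi(U)=W$, a point $x \in X_{-n}$ lies in $X_{-(n+1)}$ if and only if $\phi(\alpha^n(x)) = \beta^n(\phi(x)) \in W$, which is exactly the condition for $\phi(x)$ to lie in $Y_{-(n+1)}$. The equality $\phi \circ \alpha^{n+1} = \beta^{n+1} \circ \phi$ on $X_{-(n+1)}$ then follows from one more application of the hypothesis at $n=1$.

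For the negative side, the key observation is that the hypothesis $\phi \circ \alpha = \beta \circ \phi$ on $U$ immediately implies $\phi \circ \alpha^{-1} = \beta^{-1} \circ \phi$ on $V$: for $y = \alpha(x) \in V$ with $x \in U$, we have $\beta^{-1}(\phi(y)) = \beta^{-1}(\beta(\phi(x))) = \phi(x) = \phi(\alpha^{-1}(y))$, where $\beta^{-1}$ is defined at $\phi(y)$ since $\phi(y) \in \phi(V) = Z$. With this in hand, the same inductive argument as above works for $n \leq -1$, replacing $U, \alpha, W, \beta$ by $V, \alpha^{-1}, Z, \beta^{-1}$.

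Once the two conditions $\phi(X_n) \subseteq Y_n$ and $\beta_n \circ \phi = \phi \circ \alpha_n$ on $X_{-n}$ are established for every $n \in \ZZ$, $\phi$ is a morphism in the sense of Definition \ref{morphism}. The symmetric argument applied to $\phi^{-1}$ (whose hypotheses are obtained by applying $\phi^{-1}$ to the given equalities) shows that $\phi^{-1}$ is a morphism from $\beta$ to $\alpha$, and therefore the two partial actions are isomorphic. There is no real obstacle here; the only thing to be careful about is keeping track of which domain each equality lives on at each step of the induction, so that $\beta^{-1}$ or $\beta^n$ is always applied to a point where it is defined.
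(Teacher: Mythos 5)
Your proposal is correct and follows essentially the same route as the paper's proof: iterate the one-step intertwining relation to get $\phi\circ\alpha^n=\beta^n\circ\phi$ on $X_{-n}$, derive the inverse relation $\phi\circ\alpha^{-1}=\beta^{-1}\circ\phi$ on $V$ to handle negative $n$, and use the symmetric argument with $\phi^{-1}$ to obtain equality of domains. The only cosmetic difference is that you run a simultaneous induction on the domains and the intertwining identity, whereas the paper establishes the identity first and then the equality $\phi(X_n)=Y_n$ separately.
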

\begin{proof}
In our setting we have that $X_{-n} = \text{dom}(\alpha^n)$, $Y_{-n} = \text{dom}(\beta^n)$, $n\in \ZZ$, $\alpha_n=\alpha^n$ and $\beta_n = \beta^n$. We have to show that, for all $n\in \ZZ$, $\phi(X_n)=Y_n$ and $\phi \circ \alpha^n = \beta^n \circ \phi$ in $X_{-n}$.

We prove first that $\phi \circ \alpha^n = \beta^n \circ \phi$ in $X_{-n}$ for all $n\in \NN$. Notice that if $x\in \text{dom} (\alpha^n)$ then $\alpha^{n-k}(x) \in \text{dom} (\alpha)$ for all $k=1,\ldots, n$. So the usual computation $$\phi \circ \alpha^n(x) = \phi \circ \alpha \circ \alpha^{n-1}(x) = \beta \circ \phi \alpha^{n-1}(x) = \ldots = \beta^n \circ \phi(x)$$ is well defined at every step and we obtain the desired result. Now, since $\alpha^{-1}(V) = U$, $\phi \circ \alpha = \beta \circ \phi$ implies that $\beta^{-1} \circ \phi = \phi\circ \alpha^{-1}$ in $V$ and hence it follows that $\phi \circ \alpha^n = \beta^n \circ \phi$ in $X_{-n}$ for all $n\in \ZZ$.

Next we prove that, for all $n\in \ZZ$, $\phi(X_n)=Y_n$. Notice that $\phi \circ \alpha^n = \beta^n \circ \phi$ implies that $\phi (X_n)\subseteq Y_n$ for all $n\in \ZZ$. For the other inclusion, notice first that $Z=\beta(W)=\beta\circ \phi (U) = \phi \circ \alpha (U) = \phi(V)$. Also, since $U=\phi^{-}(W)$, we have that $\alpha \circ \phi^{-1} = \phi^{-1}\circ \beta$ in $W$ and hence $\alpha^n \circ \phi^{-1} = \phi^{-1}\circ \beta^n$, which implies that $\phi^{-1}(Y_n)\subseteq X_n$ as desired.

\end{proof}

\section{Construction of the Bratteli-Vershik model}

For the rest of this paper, we let $X$ be the Cantor set, and let $U,V\subset X$ be open sets such that 
\[
X_U:= X\setminus U, \hspace{1cm}X_V:=X\setminus V
\]
are nonempty disjoint finite sets. Suppose we have a homeomorphism
\begin{equation}\label{hdef}
h: U\to V
\end{equation}
which is minimal, which we recall means that for all $x\in X$, the orbit $
O(x) := \{ h^n(x)\mid n\in \ZZ, h^n(x)\text{ is defined}\}$
is dense in $X$. We note that if $x\in X$ is such that there exists $k\in \ZZ$ such that $h^k(x)\in X_U$, then $O(x) = \{h^n(x)\mid n\leq k\}$. Likewise, if there instead exists $k\in \ZZ$ such that $h^k(x)\in X_V$, then $O(x) =\{h^n(x)\mid n\geq k\}$.

Our main result, Theorem \ref{maintheo}, is that we can find an ordered Bratteli diagram whose Vershik map is isomorphic (in the sense of Definition \ref{morphism}) to the partial action derived from $h$. This is accomplished with a sequence of lemmas which comprise the rest of the paper.

We begin with a lemma concerning sequences which converge to points in $X_U$ or $X_V$.

\begin{lem}\label{hXU}
\begin{enumerate}
\item Let $x\in X_U$ and suppose that $(x_n)_{n\in\NN}$ is a sequence in $U$ such that $x_n\to x$. Then every accumulation point of $(h(x_n))_{n\in\NN}$ is in $X_V$.
\item Let $x\in X_V$ and suppose that $(x_n)_{n\in\NN}$ is a sequence in $V$ such that $x_n\to x$. Then every accumulation point of $(h^{-1}(x_n))_{n\in\NN}$ is in $X_U$.
\end{enumerate}
\end{lem}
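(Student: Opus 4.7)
The plan is to prove part (1) by contradiction, exploiting that $h$ is a homeomorphism and so $h^{-1}: V \to U$ is continuous. Part (2) will follow by an identical argument with the roles of $U,V$ and $h, h^{-1}$ swapped.

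For part (1), I would fix an accumulation point $y$ of $(h(x_n))_{n\in\NN}$ and pass to a subsequence $(x_{n_k})$ such that $h(x_{n_k}) \to y$. Assume toward a contradiction that $y \notin X_V$, that is, $y \in V$. Since $h: U \to V$ is a homeomorphism, $h^{-1}$ is continuous on $V$, so applying $h^{-1}$ to the convergent sequence $h(x_{n_k}) \to y$ yields
\[
x_{n_k} = h^{-1}(h(x_{n_k})) \longrightarrow h^{-1}(y) \in U.
\]
But $(x_{n_k})$ is a subsequence of $(x_n)$, which converges to $x$, so $x = h^{-1}(y) \in U$. This contradicts $x \in X_U = X \setminus U$, so $y$ must lie in $X_V$.

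Part (2) is entirely symmetric: given $x \in X_V$ and $(x_n) \subset V$ with $x_n \to x$, an accumulation point $y$ of $(h^{-1}(x_n))$ lies in $U$ if one seeks a contradiction, and then continuity of $h$ on $U$ forces $x_{n_k} = h(h^{-1}(x_{n_k})) \to h(y) \in V$, giving $x = h(y) \in V$, contradicting $x \in X_V$.

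The only delicate point is noticing that one must pass to a subsequence to talk about an ``accumulation point'' rather than the limit, and that the argument crucially uses the fact that $h$ is a \emph{homeomorphism} of $U$ onto $V$ (so $h^{-1}$ is continuous on the open set containing the limit of $h(x_{n_k})$). There is no serious obstacle: the proof is essentially a direct consequence of the openness of $U$ and $V$ together with bicontinuity of $h$.
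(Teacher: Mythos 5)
Your proof is correct and is essentially identical to the paper's: both pass to a subsequence converging to the accumulation point $y$, assume $y\in V$ for contradiction, and use continuity of $h^{-1}$ on $V$ to conclude $x=h^{-1}(y)\in U$, contradicting $x\in X_U$. Nothing further is needed.
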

\begin{proof}
We prove the first statement -- the second statement has an analogous proof. Suppose that $x_n\to x\in X_U$, let $y$ be an accumulation point of $(h(x_n))_{n\in\NN}$, and suppose that $y\in V$. Find a subsequence $(h(x_{n_k}))_{k\in\NN}$ which converges to $y$. For all $n\in \NN$, $h(x_n)\in V$, and so we have
\begin{eqnarray*}
h(x_{n_k}) &\to & y\\
h^{-1}(h(x_{n_k})) &\to& h^{-1}(y)\\
x_{n_k}&\to& h^{-1}(y)
\end{eqnarray*}
and so $h^{-1}(y) = x$, which implies that $x\in U$, a contradiction. Hence $y\in X_V$.
\end{proof}
We make the following definitions. For $x\in X$, let 
\[
O^+(x) := \{h^n(x)\mid n\in \NN\},
\] 
\[
O^-(x) := \{h^{-n}(x)\mid n\in \NN\}.
\] 
Also, let
\begin{equation}\label{nU}
n_U(x) = \sup\{n\in \ZZ\mid h^n(x)\in U\} + 1
\end{equation}
\begin{lem}\label{densepositiveorbit}
Let $W$ be a clopen set containing $X_V$, and let $x\in X$. Then,
\begin{enumerate}
\item if $O^+(x)$ is infinite, it intersects $W$, and
\item if $x\notin X_V$, then $O^-(x)$ intersects $W$.
\end{enumerate}
\end{lem}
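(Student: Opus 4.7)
The plan is to prove both statements by contradiction, using compactness together with minimality and Lemma \ref{hXU}. The arguments are symmetric under the swap $(h, U, X_U) \leftrightarrow (h^{-1}, V, X_V)$, so I will sketch (1) in detail; for (2) I will note that if $O^-(x)$ is finite it terminates at some $h^{-k}(x) \in X_V \subseteq W$, and that the infinite case runs identically to (1) with signs reversed.

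I will assume $O^+(x)$ is infinite and $O^+(x) \cap W = \emptyset$. Since $X_V \subseteq W$, the clopen set $W^c$ is contained in $V$, and $(h^n(x))_{n \geq 0}$ lies in the compact set $W^c$; extract a subsequential limit $h^{n_j}(x) \to y \in W^c$. Since $X = U \sqcup X_U$, either $y \in X_U$ or $y \in U$. If $y \in X_U$, Lemma \ref{hXU}(1) applied to $(h^{n_j}(x)) \subseteq U$ forces every accumulation point of $(h^{n_j + 1}(x))$ to lie in $X_V \subseteq W$; but $(h^{n_j + 1}(x))$ lies in the closed set $W^c$, whose accumulation points must lie in $W^c$, forcing $X_V \cap W^c \neq \emptyset$, a contradiction.

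If instead $y \in U$, then $y \in U \cap V$, and by minimality $O(y)$ meets the open set $W$, giving $h^m(y) \in W$ for some $m \in \ZZ \setminus \{0\}$. For $m > 0$, defined-ness of $h^m(y)$ gives $y, h(y), \ldots, h^{m-1}(y) \in U$, and an iterated-continuity chain yields $h^{n_j + m}(x) \to h^m(y) \in W$; openness of $W$ then places $h^{n_j + m}(x) \in W$ for $j$ large, contradicting $O^+(x) \subseteq W^c$. For $m < 0$, the symmetric continuity of $h^{-1}$ on $V$ (valid since $W^c \subseteq V$) produces the chain $h^{n_j - k}(x) \to h^{-k}(y)$ for $k = 1, \ldots, |m|$, and in particular $h^{n_j - |m|}(x) \to h^{-|m|}(y) \in W$; since $n_j \to \infty$ the shifted indices $n_j - |m|$ are non-negative for large $j$, again producing a point of $O^+(x)$ inside $W$.

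The step I expect to be most delicate is the subcase $m < 0$, where $O(y)$ only meets $W$ in the past: the resolution is to pull the approximating subsequence itself backward along the orbit, which is justified because $W^c \subseteq V$ keeps $h^{-1}$ defined and continuous at each $h^{n_j}(x)$, and because $n_j \to \infty$ keeps the backward-shifted indices in $\NN$ for all but finitely many $j$.
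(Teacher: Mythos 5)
Your proof is correct and follows essentially the same route as the paper's: compactness of $W^c$ to extract a subsequential limit $y$, Lemma \ref{hXU} to dispose of the $X_U$ obstruction, and minimality of $O(y)$ plus a shift-by-$m$ continuity argument to contradict $O^+(x)\subseteq W^c$. The only differences are presentational — you handle only $y\in X_U$ directly (letting definedness of $h^m(y)$ take care of intermediate iterates, where the paper first rules out $h^n(y)\in X_U$ for all $n$) and you spell out the $m<0$ case via $h^{-1}$ on $W^c\subseteq V$, which the paper leaves implicit.
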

\begin{proof}
We prove the first statement first. Take $x\in X$, suppose that $O^+(x)$ is infinite, and suppose that it does not intersect $W$. Then $(h^n(x))_{n\in\NN}$ is a sequence in the compact set $X\setminus W$, and so has a convergent subsequence $h^{n_k}(x)\to y\in X\setminus W$. Note that $h^n(y)$ cannot be in $X_U$ for any $n\in \NN$, because if we could find such an $n$, by Lemma \ref{hXU} we would be able to find a subsequence of $(h^{n_k+n+1}(x))_{k\in \NN}$ converging to an element of $X_V$, which contradicts our assumption that $O^+(x)$ does not intersect $W$.

Now, find $m\in \ZZ$ such that $h^m(y)\in W$, and find $K$ such that $n_k\geq m$ for all $k\geq K$. Then the sequence $(h^{n_k + m}(x))_{k\geq K}$ is contained in $O^+(x)$ and converges to $h^m(y)\in W$, a contradiction. Hence $O^+(x)$ intersects $W$.

To prove the second statement, suppose $x\notin X_V$. If $O^-(x)$ is finite, say $O^-(x) = \{h^{-n}(x)\mid 1\leq n\leq k\}$, then $h^{-k}(x)\in X_V\subset W$. If $O^-(x)$ is infinite, a similar argument to the above yields that $O^-(x)$ intersects $W$.
\end{proof}

\begin{lem}\label{Kclopen}
Let $W$ be a clopen set containing $X_V$. Then $h^{-1}(W\setminus X_V)\cup X_U$ is clopen. 
\end{lem}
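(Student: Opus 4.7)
The plan is to verify openness and closedness of $S := h^{-1}(W\setminus X_V)\cup X_U$ separately. First I would note that $X_V$ is finite (hence closed) and $W$ is open, so $W\setminus X_V = W\cap V$ is an open subset of $V$; therefore $h^{-1}(W\setminus X_V)$ is open in $U$, and hence open in $X$. Consequently $S$ is a neighborhood of each of its points in $h^{-1}(W\setminus X_V)$ for free, and only the finitely many points of $X_U$ require attention for openness.

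Fix $x\in X_U$. Since $X_U$ is finite, I can choose a clopen neighborhood $N$ of $x$ with $N\cap X_U=\{x\}$, so that $N\setminus\{x\}\subseteq U$. I would then shrink $N$ further so that $h(N\setminus\{x\})\subseteq W$. If such a shrinking were impossible, taking a decreasing clopen basis $N_k\downarrow\{x\}$ one could produce a sequence $x_k\in N_k\cap U$ with $x_k\to x$ and $h(x_k)\notin W$; extracting a convergent subsequence of $(h(x_k))$ inside the compact set $X\setminus W$ yields an accumulation point of $(h(x_k))$ lying outside $X_V$, directly contradicting Lemma \ref{hXU}(1). Once $N$ is small enough, every $y\in N\setminus\{x\}$ lies in $U$ with $h(y)\in W\cap V=W\setminus X_V$, so $N\setminus\{x\}\subseteq h^{-1}(W\setminus X_V)$ and $N\subseteq S$, giving openness at $x$.

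For closedness, I would take a sequence $(y_n)\subseteq S$ with $y_n\to y$ and split according to whether infinitely many $y_n$ lie in $X_U$. If they do, then by pigeonhole on the finite set $X_U$ the sequence is eventually constant in $X_U$, so $y\in X_U\subseteq S$. Otherwise I may assume $y_n\in h^{-1}(W\setminus X_V)$ for all $n$; if $y\in X_U$ we are already done, and otherwise $y\in U$, so continuity of $h$ gives $h(y_n)\to h(y)$. Since $W$ is closed and each $h(y_n)\in W$, we get $h(y)\in W$; since $y\in U$ forces $h(y)\in V=X\setminus X_V$, we conclude $h(y)\in W\setminus X_V$, so $y\in h^{-1}(W\setminus X_V)\subseteq S$.

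The main obstacle is the openness step at the points of $X_U$: by construction $h^{-1}(W\setminus X_V)$ sits inside $U$ and is disjoint from $X_U$, so nothing in its definition alone forces it to accumulate at these boundary points. Exactly this accumulation is the content of Lemma \ref{hXU}(1): sequences in $U$ converging to a point of $X_U$ must send all $h$-accumulation points into $X_V\subseteq W$, which is precisely what drives the argument through.
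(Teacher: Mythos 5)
Your proof is correct and follows essentially the same route as the paper's: openness is automatic on $h^{-1}(W\setminus X_V)$ and is established at the finitely many points of $X_U$ by contradiction, using a decreasing clopen basis and Lemma \ref{hXU}(1), while closedness follows from a sequence argument using continuity of $h$ and the fact that $X_V$ is not in the image of $h$. The only difference is cosmetic: you derive the contradiction by locating an accumulation point of $(h(x_k))$ in the compact set $X\setminus W$, whereas the paper extracts a subsequence of $(h(z_{n_k}))$ converging to a point of $X_V\subset W$ and contradicts openness of $W$.
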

\begin{proof}
Set $K : = h^{-1}(W\setminus X_V)\cup X_U$. To show that this set is closed, we take a sequence $(x_n)_{n\in\NN}$ in $K$ converging to $x\in X$ and prove that $x\in K$. If $x\in X_U$ we are done, so we suppose $x\in U$, and without loss of generality we may assume that no $x_n$ is in $X_U$. Hence $h(x_n)\to h(x)$ and, since $h(x_n)\in W$ for all $n$, we must have that $h(x)\in W$. Since $X_V$ is not in the image of $h$, we must have $h(x) \in W\setminus X_V$ and so $x\in h^{-1}(W\setminus X_V)\subset K$.

We now show that $K$ is open. Since $h^{-1}(W\setminus X_V)$ is open, we need only show that for all $x\in X_U$ we can find an open set around $x$ in $K$. Suppose that this is not possible, that is, every clopen neighborhood of $x$ contains a point outside of $K$. Let $(Z_n)_{n\in\NN}$ be a sequence of clopen sets with intersection $x$, and for each $n$ take $z_n\in Z_n$ with $z_n\in K^c = [h^{-1}(W\setminus X_V)]^c \cap U$. By Lemma \ref{hXU}, there exists a subsequence $(z_{n_k})_{k\in \NN}$ of $(z_n)_{n\in\NN}$ such that $h(z_{n_k})$ converges to a point $y\in X_V$. But because $z_{n_k}\notin h^{-1}(W\setminus K)$ for all $k\in \NN$, and each is in $V$, $h(z_{n_k})$ is not in $W$ for any $k$. This is a contradiction, because this sequence was supposed to converge to $y\in W$ and $W$ is clopen. Hence $K$ is open.
\end{proof}

As stated above, our goal is to construct a sequence of partitions so as to obtain a Bratteli- Vershik model for our partial dynamical system. In previously studied cases of globally defined homeomorphisms, the key ingredient for getting such partitions is a ``first return time'' map which sends each point $x$ in a given clopen set $W$ to the first positive integer such that $h^n(x)$ is back again in $W$. In our setup, there are some points in $X$ whose positive orbit may not intersect a given clopen set at all -- these are the points whose orbit ends in $X_U$. Hence, a first return time may not be defined. We deal with this problem by taking our initial clopen set $W$ to contain $X_V$ and interpreting Lemma \ref{hXU} as saying that ``$h(X_U) = X_V$'', so that if $O^+(x)$ does not intersect $W$ and $h^n(x)\in X_U$, then the return time of $x$ to $W$ ``should be'' $n+1$. 
\begin{prop}\label{returntime}
Let $W\subset X$ be a clopen set containing $X_V$ and disjoint from $X_U$, and define $\tilde r_W: X\to \NN$ by
\[
\tilde r_W(x) =  \begin{cases}\inf \{n\in \NN\mid h^n(x)\in W\} & \text{if }h^n(x)\in W\text{ for some }n\in\NN\\ n_U(x) + 1 & \text{otherwise}\end{cases}
\]
where $n_U$ is as defined in \eqref{nU}. Then $\tilde r_W$ is continuous.
\end{prop}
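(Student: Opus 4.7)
Since $\NN$ carries the discrete topology, continuity of $\tilde r_W$ is equivalent to $\tilde r_W$ being locally constant, so the plan is to fix an arbitrary $x\in X$ and produce a clopen neighborhood of $x$ on which $\tilde r_W$ takes the constant value $\tilde r_W(x)$. I will split according to which of the two clauses in the definition applies at $x$.

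Suppose first that $n := \tilde r_W(x)$ comes from the first clause, so $h^n(x)\in W$ while $h^j(x)\notin W$ for $0\leq j<n$. Each $h^j$ with $0\leq j\leq n$ is a homeomorphism between open sets of $X$ and is defined at $x$, so there is a common clopen neighborhood $N$ of $x$ on which $h^0,\dots,h^n$ are all defined and continuous. Since $W$ and $X\setminus W$ are both clopen, after shrinking $N$ we may assume that $h^n(N)\subset W$ and $h^j(N)\subset X\setminus W$ for $0\leq j<n$, so the first clause yields $\tilde r_W(y)=n$ for every $y\in N$.

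The more delicate situation is when the second clause applies at $x$. Then $h^m(x)\notin W$ for every $m\in\NN$ at which it is defined, so by Lemma \ref{densepositiveorbit} the orbit $O^+(x)$ must be finite; writing $k=n_U(x)$, we have $h^k(x)\in X_U$ and $\tilde r_W(x)=k+1$. Just as before, for $y$ in a sufficiently small clopen neighborhood $N$ of $x$ the iterates $h^0(y),\dots,h^k(y)$ are all defined and avoid $W$. The critical step, which I expect to be the main obstacle, is to show that after further shrinking $N$, one has $h^{k+1}(y)\in W$ whenever $h^{k+1}(y)$ is defined. I would prove this by contradiction: if it failed, there would exist a sequence $y_m\to x$ with $h^k(y_m)\in U$ and $h^{k+1}(y_m)\notin W$; then $h^k(y_m)\to h^k(x)\in X_U$, and Lemma \ref{hXU}(1) forces every accumulation point of $\bigl(h^{k+1}(y_m)\bigr)$ to lie in $X_V\subset W$, which contradicts the fact that this sequence stays in the closed set $X\setminus W$.

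Granted this claim, for $y\in N$ there are two possibilities. If $h^{k+1}(y)$ is defined, then by the claim it lies in $W$, while $h^j(y)\notin W$ for $0\leq j\leq k$, so the first clause gives $\tilde r_W(y)=k+1$. If instead $h^{k+1}(y)$ is not defined, then $h^k(y)\in X_U$, which forces $n_U(y)=k$ and also $O^+(y)\cap W=\emptyset$, so the second clause gives $\tilde r_W(y)=k+1$. Either way, $\tilde r_W$ is constant on $N$ with value $\tilde r_W(x)$, which finishes the plan; the only substantive difficulty is the mixed behavior in the hard case, where one must bridge the two branches of the definition and the domain of $h^{k+1}$ is not uniform on $N$, which is precisely what Lemma \ref{hXU} is tailored to overcome.
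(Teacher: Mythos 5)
Your proof is correct and follows essentially the same route as the paper: pull the clopen conditions back (or, equivalently, shrink a neighborhood of $x$ so the iterates land in prescribed clopen sets), and handle the mixed behaviour near $X_U$ via Lemma \ref{hXU} --- your contradiction argument for the critical claim is precisely the paper's proof of Lemma \ref{Kclopen}, which the paper isolates as a separate lemma and then invokes. One small slip: you impose $h^j(N)\subset X\setminus W$ for $j=0$ as well, which is impossible when $x\in W$ (e.g.\ when $x\in X_V$); since $\tilde r_W$ only inspects $n\geq 1$, those conditions should be required only for $1\leq j$.
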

\begin{proof}
We first note that this map is well-defined by Lemma \ref{densepositiveorbit}.

Let $n\in \NN$ and suppose that $x\in \tilde r_W^{-1}(n)$. We have two cases to consider. The first is to suppose that $h^n(x)\in W$ and $h^k(x)\notin W$ for all $1\leq k < n$. Find a clopen set $W_0$ inside $W\setminus X_V$ containing $h^n(x)$, and let

\begin{eqnarray*}
W_1 &=& h^{-1}(W_0)\cap W^c\\
W_2 &=& h^{-1}(W_1)\cap W^c\\
    &\vdots& \\
W_n &=& h^{-1}(W_{n-1}).
\end{eqnarray*}
Each $W_k$ is clopen and disjoint from $X_U$ and $W$ for $1\leq k < n$. Furthermore, $h(W_k)\subset W_{k-1}$ for all $1\leq k \leq n$, $x\in W_n$ and $h^{n}(W_n)\subset W_0\subset W$. Hence $W_n$ is a clopen set inside $ \tilde r_W^{-1}(n)$ around $x$.


For the second case, we suppose that $x\in \tilde r_W^{-1}(n)$ and that $n_U(x) +1 = n$, which is to say that $h^{n-1}(x)\in X_U$ and $h^k(x)\notin W$ for any $1\leq k< n-1$. Let $K = h^{-1}(W\setminus X_V)\cup X_U$, and recall from Lemma \ref{Kclopen} that $K$ is clopen. Similarly to the above, let 

\begin{eqnarray*}
W_1 &=& K\cap W^c\\
W_2 &=& h^{-1}(W_1)\cap W^c\\
    &\vdots& \\
W_n &=& h^{-1}(W_{n-1}).
\end{eqnarray*}
Then $h(W_k)\subset W_{k-1}$ for all $2\leq k \leq n$, and each $W_k$ is a nonempty clopen set disjoint from $X_U$ and $W$. For $2\leq k \leq n$, if $y\in W_k$, then $h^{k-1}(y)\in W_1$. If $h^{k-1}(y)\in X_U$, then $h^m(y)$ is not in $W$ for any natural number $m$ and so $\tilde r_W(y) = k$. If on the other hand $h^{k-1}(y)\in h^{-1}(W\setminus V)$, then $h^k(y)\in W$ and $k$ is the first natural number for which this is true, and so again $\tilde r_W(y) = k$. Hence $W_n$ is a clopen set around $x$ contained in $\tilde r_W^{-1}(n)$ and we are done.
\end{proof}
Now that we have a well-defined return time, we can construct the partitions we need to define a Bratteli diagram. 
\begin{prop} \label{KRlemma}Let $h$ be as in \eqref{hdef}, let $W$ be a clopen set containing $X_V$ disjoint from $X_U$, and let $\mathcal{P}$ be a finite partition of $X$ into clopen sets. Then there exist positive integers $K, J(1), J(2),\dots, J(K)$ and a partition $\{E(k, j)\mid 1\leq k\leq K, 1 \leq j \leq J(k)\}$ of $X$ into clopen sets which is finer than $\mathcal{P}$ and which has the following properties: 

\begin{enumerate}
\item $X_U\subset \bigcup_{k = 1}^K E(k, J(k))$, $X_V\subset \bigcup_{k = 1}^K E(k, 1) = W$
\item $h(E(k,j)) = E(k, j+1)$ for $1\leq k\leq K$ and $1\leq j < J(k)$
\item $ \bigcup_{k = 1}^K h(E(k, J(k))\setminus X_U) \cup X_V =   \bigcup_{k = 1}^K E(k, 1)$
\end{enumerate}
\end{prop}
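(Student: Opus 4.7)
The approach is the classical Kakutani--Rokhlin tower construction, adapted to the partial setting and built on the continuous return map $\tilde r_W$ of Proposition~\ref{returntime}. Since $\tilde r_W$ is continuous on the compact space $X$, its image is a finite subset of $\NN$. I partition $W$ into the clopen pieces $W_j := W\cap\tilde r_W^{-1}(j)$ for $j$ in this image, and above each $W_j$ build a ``column'' consisting of the floors $h^i(W_j)$ for $0\leq i\leq j-1$. These iterates are well-defined in both regimes of the definition of $\tilde r_W$: for a returning orbit the iterates $h,h^2,\ldots,h^{j-1}$ are all defined on $W_j$, while in the non-returning case the identity $\tilde r_W=n_U+1$ places the corresponding element of $X_U$ at the top floor, after which no further iterate exists.

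The first substantive step is to check that the collection of floors $\{h^i(W_j)\}$ is a clopen partition of $X$. Disjointness within and between columns follows, by applying appropriate inverse iterates of $h$, from the defining property $h^i(W_j)\cap W=\emptyset$ for $0<i<j$. Coverage uses Lemma~\ref{densepositiveorbit}(2): any $y\notin X_V$ admits a smallest $k\geq 0$ with $h^{-k}(y)\in W$, and then $y$ sits on the $k$-th floor of the column based at $h^{-k}(y)$; any $y\in X_V\subseteq W$ is already in some base. When $y\in X_U$ this argument forces the base $h^{-k}(y)$ to lie in a non-returning column, placing $y$ precisely at its top---this is exactly what property~(1) will require.

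To refine so that each floor lies inside a single member of $\mathcal{P}$, for each $j$ in the image of $\tilde r_W$ and each tuple $(P_0,\ldots,P_{j-1})\in\mathcal{P}^j$, let $W_j^{P_0,\ldots,P_{j-1}}$ denote the (clopen) set of $x\in W_j$ with $h^i(x)\in P_i$ for all $0\leq i\leq j-1$. The nonempty such sets partition $W_j$, and hence their $h$-iterates refine the tower partition so that each floor sits inside a single $\mathcal{P}$-piece. Enumerate the nonempty refined bases as $E(k,1)$, $1\leq k\leq K$, let $J(k)$ be the corresponding return time, and set $E(k,i):=h^{i-1}(E(k,1))$ for $2\leq i\leq J(k)$.

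Verification of (1)--(3) is then bookkeeping. Property (2) holds by construction, and (1) was confirmed during coverage. For (3), the top of each column maps (where defined) into $W$ and $X_V\subseteq W$, giving the inclusion $\subseteq$; conversely, for $y\in W\setminus X_V$ the point $h^{-1}(y)$ lies in some floor $E(k',i)$, and since non-base floors of any column are disjoint from $W$, the fact that $y\in W$ forces $i=J(k')$, so $y\in h(E(k',J(k'))\setminus X_U)$. The main obstacle throughout is keeping the case-split between returning and non-returning orbits straight---in particular, confirming that elements of $X_U$ land precisely at the tops of the non-returning columns so that the top-image rule $h(E(k,J(k))\setminus X_U)$ cleanly reproduces $W\setminus X_V$.
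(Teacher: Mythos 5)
Your proof is correct and follows essentially the same route as the paper: restrict the continuous return map $\tilde r_W$ to $W$, partition $W$ by its finitely many values, build the towers as $h$-iterates of these bases, and verify disjointness, coverage, and properties (1)--(3) exactly as the paper does. The only difference is that you spell out the itinerary refinement making the partition finer than $\mathcal{P}$, whereas the paper defers that step to \cite[Lemma 3.1]{Pu89}; your version of it is the standard argument and is correct.
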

\begin{proof}
Let $W$ be a clopen set containing $X_V$ and disjoint from $X_U$, and let $r_W : = \left.\tilde r_W\right|_W$. Since $r_W$ is continuous and $W$ is compact, its image is a finite set in $\NN$, say $r_W(W) = \{J(1), J(2), \dots, J(K)\}$. For $k = 1, \dots, K$, let $W_k = r_W^{-1}(J(k))$, so that $W = \cup_{k}W_k$ (where this union is a disjoint union). For $k = 1, \dots, K$ and $1\leq j \leq J(K)$, let 

\[E(k, j) = h^{j-1}(W_k).
\] 
This is well-defined because $h^{j-1}(x)$ will not be in $X_U$ for any $x\in W_k$ and $j< J(K)$.

We claim that the family $\{E(k, j)\mid 1\leq k\leq K, 1 \leq j \leq J(k)\}$ satisfies all the conditions given, except possibly that it is finer than $\mathcal{P}$. First we show that the given family of sets is pairwise disjoint, for each member is clearly clopen. Suppose that we have $x\in E(k, j)\cap E(g, l)$, where $1\leq k, g\leq K$, $0\leq j \leq J(K)$, and $1\leq l \leq J(g)$. Without loss of generality, we may assume that $j \leq l$. If $j = l$, then $h^{-j+1}(x)$ is in both $W_k$ and $W_g$, which is only possible if $k = g$. If $j < l$, then $h^{-j+1}(x)\in W_k$, $h^{-l+1}(x)\in W_g$, and $h^{l-j}(h^{-l+1}(x)) = h^{-j}(x)\in W_k\subset W$. Since $l-j < J(g)$, this contradicts the fact that $h^{-l+1}(x)\in W_g$. Hence this family is disjoint.

Before we prove that this family forms a partition of $X$, we prove the numbered statements. For statement 1, if $x\in X_U$, let $k=\min\{j\in \NN: j^{-j}(x)\in W\}$ (notice that k is well defined since $O(x)$ is dense). Then $r_W(h^{-k}(x)) = k+1$, and thus $k+1  = J(l)$ for some $l$ and $h^{-k}(x)\in E(l,1)$. Hence $x = h^k(h^{-k}(x))\in E(l, J(l))$. Hence $X_U\subset \bigcup_{k = 1}^K E(k, J(k))$. The second part of statement 1 is true by the assumption on $W$. Statement 2 is by definition. For statement 3, the containment $\subset$ is straightforward. For the other containment, suppose that $x\in \bigcup_{k = 1}^K E(k, 1)$, and that $x\notin X_V$. Then $h^{-j}(x)\in W$ for some $j\in \NN$ by Lemma \ref{densepositiveorbit}.2 -- we assume that this is the first $j$ for which this is true. Then $j = J(k)$ for some $k$ and $h^{-1}(x) = h^{j-1}(h^{-j}(x)) \in E(k, J(k))\setminus X_U$, proving statement 3.

Finally, we show that this family is a partition of $X$. Let $x\in X$. If $x\in W$ we are done, so suppose that $x\notin W$. 
By Lemma \ref{densepositiveorbit}.2, $O^-(x)$ intersects $W$, so let $j$ be the first positive integer such that $h^{-j}(x)\in W$. Then $h^{-j}(x)\in W_k$ for some $k$, and so $x\in h^j(W_k) = E(k, j-1)$. Hence the union of all the $E(k, j)$ is $X$.



To show that we can achieve the above properties and also be finer than $\mathcal{P}$, one can use a similar argument to that used to prove \cite[Lemma 3.1]{Pu89}.
\end{proof}

\begin{defn}
Let $X$ be the Cantor set and let $h:U\to V$ be as in \eqref{hdef}. We will say that a partition $$\{E(k, j)\mid 1\leq k\leq K, 1 \leq j \leq J(k)\}$$ of $X$ into clopen sets is a {\em Kakutani-Rokhlin partition} for $(X,h)$ if it satisfies conditions 1--3 of Proposition \ref{KRlemma}. For each $k$ with $1\leq k\leq K$, we call the collection $T_k : =\{E(k, j)\mid 1\leq j \leq J(k)\}$ a {\em tower} -- in this case we will say that $E(k,1)$ is the {\em base} of $T_k$ and that $E(k, J(k))$ is the {\em top} of $T_k$. 
\end{defn}

As in \cite[Theorem 4.2]{HPS}, we can obtain a sequence of Kakutani-Rohlin partitions for $(X, h)$ with the aim of producing an ordered Bratteli diagram.

\begin{prop}\label{KRsequence}
Let $X$ be the Cantor set and let $h:U\to V$ be as in \eqref{hdef}. Then for each $n\geq 0 $ there exist $K_n, J(n,1), J(n, 2), \dots J(n, K_n)\in \NN$ and a Kakutani-Rokhlin partition 
\begin{equation}\label{partitionsequence}
\mathcal{P}_n = \{ E(n, k, j)\mid 1\leq k\leq K_n , 1\leq j \leq J(n, k)\}
\end{equation}
of $(X, h)$ such that
\begin{enumerate}
\item $\{\bigcup_{k = 1}^{K_n}E(n, k, 1)\}_{n\geq 0}$ is a decreasing sequence of clopen sets with intersection $X_V$,
\item for all $n\in \NN$, $\mathcal{P}_{n+1}$ is finer than $\mathcal{P}_n$, and
\item $\bigcup_{n\in \NN}\mathcal{P}_n$ generates the topology on $X$.  
\end{enumerate} 
\end{prop}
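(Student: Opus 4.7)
The plan is to apply Proposition \ref{KRlemma} iteratively, carefully choosing both the clopen set $W$ and the partition $\mathcal{P}$ that are fed into it at each stage. Since $X$ is a Cantor set, we can fix a countable base $\{B_m\}_{m\geq 0}$ of clopen sets for the topology of $X$. Moreover, because $X_V$ is finite and disjoint from the finite set $X_U$, around each point of $X_V$ we can build a local clopen neighborhood base avoiding $X_U$; taking unions over $X_V$ produces a decreasing sequence of clopen sets $W_0\supset W_1\supset W_2\supset\cdots$ with $X_V\subset W_n$, $W_n\cap X_U=\emptyset$ for all $n$, and $\bigcap_{n\geq 0}W_n=X_V$.

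I would then construct $\mathcal{P}_n$ inductively. For the base case, set $\mathcal{Q}_0=\{B_0,X\setminus B_0\}$ and apply Proposition \ref{KRlemma} with $W=W_0$ and $\mathcal{P}=\mathcal{Q}_0$ to obtain a Kakutani--Rokhlin partition $\mathcal{P}_0$ refining $\mathcal{Q}_0$ whose base $\bigcup_k E(0,k,1)$ is exactly $W_0$. Given $\mathcal{P}_{n-1}$, let $\mathcal{Q}_n$ be the common refinement of $\mathcal{P}_{n-1}$ and $\{B_n,X\setminus B_n\}$, and apply Proposition \ref{KRlemma} with $W=W_n$ and $\mathcal{P}=\mathcal{Q}_n$ to obtain $\mathcal{P}_n$; by construction its base is $W_n$, and it refines both $\mathcal{P}_{n-1}$ and $\{B_n,X\setminus B_n\}$.

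Verifying the three properties is then largely bookkeeping. Condition (1) is immediate from statement (1) of Proposition \ref{KRlemma}, which gives $\bigcup_k E(n,k,1)=W_n$, combined with our choice of $W_n$ as a decreasing sequence of clopen sets with intersection $X_V$. Condition (2) is automatic from the inductive refinement. Condition (3) holds because each basic open $B_m$ is a union of atoms of $\mathcal{P}_m$, so $\bigcup_n\mathcal{P}_n$ refines the countable base $\{B_m\}$ and is therefore itself a base for the topology. The only place one has to exercise a bit of care is ensuring that the $W_n$ can be chosen with all three properties simultaneously and that Proposition \ref{KRlemma} really produces partitions whose base is exactly the input clopen set $W_n$ (rather than a further subdivision of it); the latter is guaranteed by the equality $\bigcup_{k}E(k,1)=W$ in statement (1) of that proposition, so no genuine obstacle arises beyond threading these choices together.
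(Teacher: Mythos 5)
Your proposal is correct and follows essentially the same route as the paper, which likewise proves this by applying Proposition \ref{KRlemma} inductively on a decreasing sequence of clopen sets disjoint from $X_U$ converging to $X_V$ (deferring details to the proof of Theorem 4.2 of Herman--Putnam--Skau). You have simply filled in the bookkeeping the paper leaves to the reference, and the details check out.
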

\begin{proof}
This is proved in the same way as in the analogous result for the minimal homeomorphism case; we apply Proposition \ref{KRlemma} inductively on a decreasing sequence $\{W^{(n)}\}_{n\geq 0}$ of clopen sets disjoint from $X_U$ which converge to $X_V$. For more details, see the proof of \cite[Theorem 4.2]{HPS}.
\end{proof}

From the sequence of Kakutani-Rokhlin partitions \eqref{partitionsequence} we produce an ordered Bratteli diagram, as in \cite{HPS}. To start we let $K_0 = 1$, $J(0,1) = 1$ and $E(0,1,1) = X$. For each $n\geq 0$, we have one vertex in $V_n$ for each tower in $\mathcal{P}_n$, that is
\[
V_n = \{(n,1), (n,2), \dots, (n, K_n)\}.
\]
To define the set of edges $E_n$ from $V_{n-1}$ to $V_n$, we consider how towers in $\mathcal{P}_n$ intersect towers in $\mathcal{P}_{n-1}$. For all $n$ and all $k$ such that $1\leq k \leq K_n$, let $T_{(n,k)}$ be the tower 
\[
T_{(n,k)} = \{E(n, k, j)\mid 1\leq j\leq J(k)\}.
\]
We say that a tower $T_{(n,k)}$ {\em passes through} a tower $T_{(n-1, m)}$ if there exists $j'$ with $1\leq j' \leq J(k)$ such that 
\[
E(n, k, j+j')\subset E(n-1, m, j)\hspace{1cm}\text{for all }1\leq j\leq J(n-1, m)
\]
in $\mathcal{P}_n$. By Proposition \ref{KRsequence}.1 the base of $T_{(n,k)}$ must be contained in the base of some tower in $\mathcal{P}_{n-1}$, and the top of $T_{(n,k)}$ must be contained in the top of some tower in $\mathcal{P}_{n-1}$. Since $\mathcal{P}_{n}$ is a refinement of $\mathcal{P}_{n-1}$, we must have that $T_{(n,k)}$ passes through some number of towers in $\mathcal{P}_{n-1}$. There will be an edge in $E_n$ from the vertex $(n-1,m)$  to the vertex $(n, k)$ for each time $T_{(n,k)}$ passes through $T_{(n-1, m)}$; more formally,
\[
E_n = \{(n, m, k, j')\mid E(n, k, j+j')\subset E(n-1, m, j)\text{ for all }1\leq j\leq J(n-1, m)\}.
\]
From the above, we have that the range and source maps are given by
\[
r(n, m, k, j') = (n,k),\hspace{1cm}s(n,m,k,j') = (n-1, m).
\]
Every edge with range $(n,k)$ has the form $(n,m,k,j')$, and we give all such edges an order based on the fourth coordinate -- this corresponds to ordering the edges based on what order $T_{(n,k)}$ passes through towers in $\mathcal{P}_n$. In symbols, we have
\[
(n,m_1,k,j_1) \geq (n,m_2,k,j_2) \Leftrightarrow j_1 \geq j_2.
\]

From the above, letting $V = \cup V_n$, $E = \cup E_n$, we have that $(V, E, \geq)$ is an ordered Bratteli diagram. This construction mirrors that from \cite[pp. 841--842]{HPS}, where they consider minimal homeomorphisms on the Cantor set which give rise to Bratteli diagrams with one max path and one min path. The following two results mirror \cite[Lemma 4.3]{HPS} and \cite[Theorem 4.4]{HPS}. Their proofs are exactly the same, since the proofs in \cite{HPS} do not rely on uniqueness of max and min path.

\begin{lem}(cf \cite[Lemma 4.3]{HPS})
Let $X$ be the Cantor set, let $h:U\to V$ be as in \eqref{hdef}, let $\{\mathcal{P}_n\}_{n\geq 0}$ be as in \eqref{partitionsequence} satisfying the conditions of Proposition \ref{KRsequence}, let $(V, E, \geq)$ be the ordered Bratteli diagram constructed above, and let $\{m_n\}_{n\geq 0}$ be a strictly increasing sequence of integers with $m_0 = 0$. If $(V', E', \geq')$ is the ordered Bratteli diagram obtained by contracting $(V, E, \geq)$ to the vertices at levels $\{m_n\}_{n\geq 0}$ and $(V'', E'', \geq'')$ is the ordered Bratteli diagram constructed from the sequence $\{\p_{m_n}\}_{n\geq 0}$ of Kakutani-Rokhlin partitions, then $(V', E', \geq')$ and $(V'', E'', \geq'')$ are isomorphic.
\end{lem}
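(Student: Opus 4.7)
The plan is to exhibit the isomorphism explicitly at the level of vertices and edges, then verify it intertwines range, source, and order. Since both diagrams have $V'_n = V''_n = V_{m_n}$ by construction, the vertex bijection is the identity on the underlying indexing sets $\{(m_n,1),\dots,(m_n,K_{m_n})\}$, so no work is required there.

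For edges, I would exploit the fact that every edge in $E''_n$ corresponds to one ``passage'' of a tower $T_{(m_n,k)}$ through a tower $T_{(m_{n-1},m)}$ in the finer partition $\mathcal{P}_{m_n}$. On the other hand, an edge in $E'_n$ is by definition a finite path in the original diagram from some $(m_{n-1},m)$ to $(m_n,k)$, that is, a chain $e_{m_{n-1}+1}e_{m_{n-1}+2}\cdots e_{m_n}$ with source $(m_{n-1},m)$ and range $(m_n,k)$. The key observation is that such a chain records exactly a nested sequence of tower passages $T_{(m_n,k)}$ through $T_{(m_n-1,\cdot)}$ through $\cdots$ through $T_{(m_{n-1},m)}$, and because $\mathcal{P}_{m_n}$ refines every intermediate $\mathcal{P}_\ell$, each such nested chain corresponds to precisely one ``passage'' in the $\mathcal{P}_{m_n}$-refinement of $T_{(m_{n-1},m)}$. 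I would write down this correspondence by induction on $m_n - m_{n-1}$, or equivalently by reading off the fourth coordinates $j'_{m_{n-1}+1}, \dots, j'_{m_n}$ of the chain and reconstructing the unique $j'$ for the $E''_n$-edge, and vice versa. This gives a bijection between $E'_n$ and $E''_n$ that is immediately compatible with $r$ and $s$.

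It then remains to check the two orderings match. The order on $E'_n$ is by definition the lexicographic order on chains, i.e.\ we compare successive fourth coordinates starting from the edge closest to $V_{m_n}$; the order on $E''_n$ is by the fourth coordinate $j'$ in the direct construction, which records the order in which $T_{(m_n,k)}$ passes through $T_{(m_{n-1},m)}$ in $\mathcal{P}_{m_n}$. Because a tower in $\mathcal{P}_{m_n}$ traverses the intermediate towers in the order induced by $h$-iteration, the lexicographic order on chains coincides with the natural order of passages, so the bijection is order-preserving.

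The main obstacle is purely bookkeeping: making sure the indices $j'$ at each intermediate level compose correctly under the refinement maps, so that a lexicographic chain unambiguously determines a single passage index and conversely. Once the composition rule for the $j'$-coordinates across levels is written out, compatibility of $r$, $s$, and $\geq$ follows immediately, and isomorphism in the sense of Definition \ref{equivalentBratteli} is established. As the authors note, this mirrors the argument for the globally-defined case in \cite[Lemma 4.3]{HPS} and uses no property of the number of maximal or minimal paths.
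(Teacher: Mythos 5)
Your proposal is correct and follows exactly the argument the paper intends: the paper gives no details here, simply asserting that the proof of \cite[Lemma 4.3]{HPS} carries over verbatim because it nowhere uses uniqueness of maximal or minimal paths, and your vertex identification, chain-of-passages bijection with summed offsets $j'$, and matching of the lexicographic order (most significant edge nearest $V_{m_n}$) with the passage order is precisely that argument. No gap to report.
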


\begin{theo}(cf \cite[Lemma 4.4]{HPS})
Let $X$ be the Cantor set, let $h:U\to V$ be as in \eqref{hdef}, and suppose that $\{\mathcal{P}_n\}_{n\geq 0}$ and $\{\mathcal{Q}_n\}_{n\geq 0}$ are two sequences of Kakutani-Rokhlin partitions satisfying the conditions of Proposition \ref{KRsequence}. Then the ordered Bratteli diagrams constructed from $\{\mathcal{P}_n\}_{n\geq 0}$ and $\{\mathcal{Q}_n\}_{n\geq 0}$ are equivalent.
\end{theo}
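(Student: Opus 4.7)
The plan is to construct a common refinement sequence by interlacing $\{\mathcal{P}_n\}$ and $\{\mathcal{Q}_n\}$, and then invoke the preceding lemma twice to exhibit both associated diagrams as contractions of a single diagram.

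First, using that both $\bigcup_n\mathcal{P}_n$ and $\bigcup_n\mathcal{Q}_n$ generate the topology of $X$ (condition 3 of Proposition \ref{KRsequence}), one selects strictly increasing sequences $(k_n)_{n\geq 1}$ and $(l_n)_{n\geq 1}$ so that
\[
\mathcal{P}_{k_1}\preceq\mathcal{Q}_{l_1}\preceq\mathcal{P}_{k_2}\preceq\mathcal{Q}_{l_2}\preceq\cdots,
\]
where $\preceq$ means ``is refined by''. The choice is inductive: having fixed $\mathcal{Q}_{l_n}$, compactness together with the topology-generating property of $\{\mathcal{P}_n\}$ guarantees an index $k_{n+1}>k_n$ such that every atom of $\mathcal{Q}_{l_n}$ is a union of atoms of $\mathcal{P}_{k_{n+1}}$, and symmetrically for the next step.

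Define an interlaced sequence $\{\mathcal{R}_n\}_{n\geq 0}$ by $\mathcal{R}_0=\{X\}$, $\mathcal{R}_{2n-1}=\mathcal{P}_{k_n}$, and $\mathcal{R}_{2n}=\mathcal{Q}_{l_n}$. I would then verify that $\{\mathcal{R}_n\}$ is a legitimate sequence of Kakutani-Rokhlin partitions satisfying the hypotheses of Proposition \ref{KRsequence}: the bases still form a decreasing sequence of clopen sets intersecting in $X_V$ (because subsequences of sequences with this property retain it, and the interlacing preserves the fact that each base is contained in the preceding one), the successive refinement is built into the construction, and the topology-generating property is inherited from either original sequence. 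Let $B_{\mathcal{P}}$, $B_{\mathcal{Q}}$, and $B_{\mathcal{R}}$ denote the ordered Bratteli diagrams produced from $\{\mathcal{P}_n\}$, $\{\mathcal{Q}_n\}$, and $\{\mathcal{R}_n\}$ respectively.

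By the preceding lemma applied to $B_{\mathcal{R}}$ with the increasing sequence of levels $\{0,1,3,5,\dots\}$, the resulting contraction is isomorphic to the diagram built directly from the subsequence $\{\mathcal{R}_{2n-1}\}_n=\{\mathcal{P}_{k_n}\}_n$. A second application of the lemma, this time to $B_{\mathcal{P}}$ contracted to $\{0,k_1,k_2,\dots\}$, shows that the diagram of $\{\mathcal{P}_{k_n}\}_n$ is isomorphic to a contraction of $B_{\mathcal{P}}$. Composing, $B_{\mathcal{P}}\sim B_{\mathcal{R}}$. The symmetric argument using even levels gives $B_{\mathcal{Q}}\sim B_{\mathcal{R}}$, and transitivity yields $B_{\mathcal{P}}\sim B_{\mathcal{Q}}$.

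The main obstacle is the bookkeeping needed to ensure that $\{\mathcal{R}_n\}$ is truly a valid Kakutani-Rokhlin sequence for $(X,h)$ rather than merely a refining sequence of clopen partitions: one must check that the tower structure, and in particular the assignment of tops to $X_U$ and bases to $X_V$ required by Proposition \ref{KRlemma}, is respected by the interlacing. This is handled exactly as in the proof of \cite[Lemma 4.4]{HPS}, since the only place the globally defined homeomorphism enters there is through the Kakutani-Rokhlin machinery, which we have reestablished in the partial setting via Propositions \ref{returntime}, \ref{KRlemma}, and \ref{KRsequence}.
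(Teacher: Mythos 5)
Your proposal is correct and follows essentially the same route as the paper, which gives no independent argument but simply defers to the interlacing-and-contraction proof of \cite[Theorem 4.4]{HPS}; your reconstruction of that argument (interlace the two sequences into a common refining sequence, apply the contraction lemma to the odd and even levels, and conclude by transitivity) is exactly what is intended. The one point you flag as needing care --- arranging the indices so that the bases of the interlaced sequence remain nested, which does not follow from partition refinement alone but is easily forced since the bases of each sequence decrease to $X_V$ --- is indeed the only nontrivial bookkeeping, and it is handled the same way as in \cite{HPS}.
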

Hence, while there is no reason to expect that constructing a Bratteli diagram from $(X,h)$ from two different sequences of clopen sets converging to $X_V$ will result in isomorphic diagrams, they always result in equivalent diagrams. 

We can now prove that for any $B$ constructed from $(X, h)$, the two partial dynamical systems $(X, h)$ and $(X_B,\lambda)$ are isomorphic.

\begin{theo}\label{maintheo}
Let $X$ be the Cantor set, let $h:U\to V$ be as in \eqref{hdef}, and let $B = (V, E, \geq)$ be an ordered Bratteli diagram constructed from $(X, h)$ as above. Let $(X_B, \lambda)$ be the partial dynamical system arising from the Vershik map. Then $(X, h)$ and $(X_B, \lambda)$ are isomorphic.
\end{theo}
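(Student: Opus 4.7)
The plan is to build a homeomorphism $\phi : X_B \to X$ directly from the tower structure of the Kakutani-Rokhlin partitions, then invoke Proposition \ref{isopa}. For each $n \geq 1$ and vertex $(n, k)$, the lexicographic ordering on finite paths from the root $(0,1)$ to $(n,k)$ gives a bijection with $\{1, 2, \ldots, J(n,k)\}$, and by the construction of the edges the path of rank $j$ corresponds to $E(n, k, j)$. For $\xi = (\xi_n)_{n \geq 1} \in X_B$, letting $(n, k_n)$ be the range of $\xi_n$ and $j_n$ the lex rank of $\xi_1 \cdots \xi_n$, the identity $j_{n+1} = j_n + j'$ (where $\xi_{n+1} = (n+1, k_n, k_{n+1}, j')$ and the tower passage definition of edges is used) gives $E(n+1, k_{n+1}, j_{n+1}) \subset E(n, k_n, j_n)$, so these clopen sets are nested. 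By Proposition \ref{KRsequence}.3 their intersection is a single point, which we take as $\phi(\xi)$.

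I would then check that $\phi$ is a homeomorphism. Continuity is immediate from the cylinder-topology description of $X_B$ together with the fact that the diameters of $E(n, k_n, j_n)$ tend to $0$. Injectivity follows because two distinct paths differ at some first level, producing prefixes of different lex rank or different range, and hence disjoint images. For surjectivity, each $x \in X$ sits in a unique cell of $\mathcal{P}_n$ for every $n$; these cells determine a unique sequence of data $(k_n, j_n)$, which, via the rank-offset relation above, is realized by a unique infinite path $\xi$ with $\phi(\xi) = x$. Since $X_B$ and $X$ are compact Hausdorff, $\phi$ is a homeomorphism.

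Next I would identify $\phi$ on the max and min paths. Let $T_n = \bigcup_k E(n, k, J(n, k))$ and $W_n = \bigcup_k E(n, k, 1)$. Condition 3 of Proposition \ref{KRlemma} yields $T_n \setminus X_U = h^{-1}(W_n \setminus X_V)$, so since Proposition \ref{KRsequence}.1 forces $W_n \downarrow X_V$ we deduce $T_n \downarrow X_U$. Thus $\phi(X_B^{\max}) = X_U$ and, by the analogous argument on bases, $\phi(X_B^{\min}) = X_V$; equivalently $\phi$ restricts to a homeomorphism of $X_{-1} := X_B \setminus X_B^{\max}$ onto $U$.

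Finally I would verify $\phi \circ \lambda = h \circ \phi$ on $X_{-1}$. Given $\xi \in X_{-1}$ and letting $n$ be the smallest index with $\xi_n$ not maximal, the Vershik map yields $\lambda(\xi) = g_1 \cdots g_n \xi_{n+1} \xi_{n+2} \cdots$, with $g_n$ the successor of $\xi_n$ and $g_1, \ldots, g_{n-1}$ minimal. A direct check on lex ranks at each level $m \geq n$ shows that the rank of $\lambda(\xi)$ at $T_{(m, k_m)}$ equals $j_m + 1$; combining this with condition 2 of Proposition \ref{KRlemma} (i.e., $h(E(m, k_m, j_m)) = E(m, k_m, j_m + 1)$, valid because $\phi(\xi) \notin X_U$) places both $\phi(\lambda(\xi))$ and $h(\phi(\xi))$ in the singleton $\bigcap_{m \geq n} E(m, k_m, j_m + 1)$. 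Proposition \ref{isopa} then delivers the desired isomorphism. The main obstacle I anticipate is the identification $\phi(X_B^{\max}) = X_U$: whereas in the HPS global-homeomorphism setting one has a unique max path trivially corresponding to a single prescribed point, here I must convert base-convergence to top-convergence using condition 3 of the Kakutani-Rokhlin partition, which is precisely the condition that encodes the partial-action behaviour of $h$ at the boundary between $X_U$ and $X_V$.
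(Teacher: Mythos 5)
Your proposal is correct and follows essentially the same route as the paper: the bijection between finite paths into $(n,k)$ and the cells $E(n,k,j)$ of $\mathcal{P}_n$, the resulting homeomorphism between $X_B$ and $X$ via nested clopen cells, the identification of $X_B^{\max}$ with $X_U$ using condition 3 of Proposition \ref{KRlemma}, the rank computation showing the Vershik successor corresponds to passing from $E(m,k_m,j_m)$ to $E(m,k_m,j_m+1)$, and the final appeal to Proposition \ref{isopa}. The only difference is the direction of $\phi$ (you map $X_B\to X$, the paper maps $X\to X_B$), which is immaterial.
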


\begin{proof}
Let $\{\p_n\}_{n\geq 0}$ be a sequence of Kakutani-Rokhlin partitions as in \eqref{partitionsequence} satisfying the conditions of Proposition \ref{KRsequence}, and let $B = (V, E, \geq)$ be the Bratteli diagram constructed from it. First we describe the infinite path space $X_B$. Let $\{(r, m_r, k_r, j'_r)\}_{r=0}^{n}$ be a finite path in $B$ ending at level $n$. Being a path implies that $k_{n-1} = m_n$ for all $n$. From the definition of the edges, we must have
\[
E(s, k_s, j'_s + j)\subset E(s-1, k_{s-1}, j)\hspace{1cm}\text{for all }j = 1, \dots J(s-1,k_{s-1}),
\]
for $1\leq s\leq n$. In particular, we must have
\[
E(s, k_s, j'_s + j'_{s-1})\subset E(s-1, k_{s-1}, j'_{s-1})
\]
and furthermore
\[
E(s, k_s, j'_s + j'_{s-1}+ j'_{s-2})\subset E(s-1, k_{s-1}, j'_{s-1}+ j'_{s-2}) \subset E(s-2, k_{s-2}, j'_{s-2}).
\]
Inductively, we must have
\begin{equation}\label{path-partition}
E\left(n, k_n, \sum_{s=0}^n j'_s\right)\subset E\left(n-1, k_{n-1}, \sum_{s=0}^{n-1} j'_s\right) \subset\cdots \subset E(0, 1,1) = X. 
\end{equation}
On the other hand, if we start with a partition element as in the left hand side of \eqref{path-partition}, the sequence in \eqref{path-partition} must be unique because $\p_k$ refines $\p_{k-1}$ for all $k$. Hence the map 
\begin{equation}\label{path-partition2}
\{(r, m_r, k_r, j'_r)\}_{r=0}^{n}\mapsto E\left(n, k_n, \sum_{s=0}^n j'_s\right)
\end{equation} 
is a bijection from the set of paths ending at level $n$ in $B$ onto $\p_n$.

Now, let $x\in X$, and consider the set $\xi_x = \{E(n, k, j)\mid x\in E(n, k, j)\}$ which can be identified using \eqref{path-partition2} as a sequence of finite paths starting from the root in $B$. Also, for $n> m$ and $E(n, k_n, j_n), E(m, k_m, j_n)\in \xi_x$, we must have that $E(n, k_n, j_n)\subset E(m, k_m, j_n)$, and so the path corresponding to $E(m, k_m, j_n)$ must a prefix of the path corresponding to $E(n, k_n, j_n)$. Let $\phi(x)$ be the infinite path in $B$ starting at the root determined by the sequence of finite paths $\xi_x$. 

The map $\phi: X \to X_B$ must be injective, because if we have $x,y\in X$ with $x\neq y$, there must be an $n$ such that $x$ and $y$ are in different elements of $\p_n$, since the union of the $\p_n$ generates the topology. Also, given an infinite path $\gamma\in X_B$, the corresponding sequence of finite paths corresponds to a nested sequence of clopen sets in $X$ via \eqref{path-partition2} which must have nonempty intersection. If $x$ is in this intersection, we must have $\phi(x) = \gamma$, so $\phi$ is surjective. Because $\phi$ is built as a correspondence between clopen sets which generate the topology on $X$ and cylinder sets in $X_B$, it is easily seen to be bicontinuous. Hence $\phi$ is a homeomorphism. 

Now, take $x\in U$. If $x$ is in the base of some tower in $\p_n$ for all $n$, we must have that $x\in X_V$. Furthermore, if $x$ is in the top of some tower in $\p_n$ for all $n$, this means that $h(x)$ is in the base of some tower in $\p_n$ for all $n$, hence $h(x)\in X_V$, a contradiction. Hence, $x\in U$ if and only if $x$ is not at the top of a tower at every level. Again, let $\xi_x = \{E(n, k_n, j_n)\mid x\in E(n, k_n, j_n)\}$. Let  $a$ be the first integer for which $x$ is not an element of the top of a tower in $\p_a$, say $x\in E(a, k_a, j_a)$. Since $\xi_x = \{E(n, k_n, j_n)\mid x\in E(n, k_n, j_n)\}$, we must have that 
\[
\phi(x) = (0,1,k_0,j'_0)(1,k_0, k_1, j'_1)\cdots \left(a-1, k_{a-2}, k_{a-1}, j'_{a-1}\right)\left(a, k_{a-1}, k_a, j'_a\right)\cdots
\]
where the equation $\sum_{s=0}^n j'_s = j_n$ holds for all $n \geq 0$. 
From the definition of $a$, the $a$th level is the first level for which $\phi(a)$ does not have a maximal edge. Hence
\[
\lambda(\phi(x)) = (0, 1,m'_0, 0)(1,m'_0,m'_1, 0)\cdots (a-1, m'_{a-2}, m'_{a-1},0)\left(a, m'_{a-1}, k_a, J+j'_a\right)(a+1, k_a, k_{a+1}, j'_{a+1})\cdots
\]
for some integer $J$ and some integers $m'_i$ which are uniquely determined by $\lambda$. The edge $(a, k_{a-1}, k_a, j'_a)$ in $\phi(x)$ indicates that $E(a, k_a, j+j_a)\subset E(a-1, k_{a-1}, j)$ for all $1\leq j \leq J(a-1, k_{a-1})$. Hence, the integer $J$ is the height of $T_{(a-1, k_{a-1})}$, that is, $J = J(a-1, k_{a-1})$. Furthermore, since $E(a-1, k_{a-1}, j_{a-1})$ was assumed to be the top of the tower, we must have 
\begin{equation}\label{jaform}
j_{a-1} = J(a-1, k_{a-1})-1.
\end{equation}

Now we turn to computing $\phi(h(x))$. By definition of $a$ we have $h(x) \in E(a,k_a,j_a+1)$, and moreover
\[
\xi_{h(x)} = \{E(n, k'_n, 1)\}_{n=0}^{a-1} \cup \{E(n, k_n, j_n + 1)\}_{n=a}^{\infty}
\]
where the $k'_n$ above are determined uniquely by $E(a,k_a, j_a +1)$. Hence we must have that 
\[
\phi(h(x)) = (0, 1,k'_0, 0)(1,k'_0,k'_1, 0)\cdots (a-1, k'_{a-1}, k'_{a},0)(a,k'_a, k_a, j_a+1)(a+1,k_{a}, k_{a+1}, j''_{a+1})\cdots
\]
Using \eqref{jaform} we calculate
\[
j_a + 1 = \sum_{s=0}^{a-1} j_s' + j'_a + 1 = j_{a-1} + j'_a  + 1 = j'_a + J(a-1, k_{a-1}).
\]
The fact that $j''_{a+1} + j_a + 1 = j_{a+1} +1$ implies that $j'_{a+1} = j''_{a+1}$, and induction on $i$ shows that $j'_{a+i} = j''_{a+i}$ for all $i\geq 1$. Finally, the initial segments of the paths $\phi(h(x))$ and $\lambda(\phi(x))$ must be the same due to the bijective correspondence given in \eqref{path-partition2}. Hence $\phi(h(x)) = \lambda(\phi(x))$ for all $x\in U$. 

Since $x\in U$ if and only if $x$ is not at the top of a tower at every level, it is easy to see that $\phi(X_U) = X_B^{\max}$, and since $\phi$ is bijective we have $\phi(U) = X_B\setminus X_B^{\max}$. Hence we can apply Proposition \ref{isopa} to conclude that $(X_B, \lambda)$ and $(X, h)$ are isomorphic.
\end{proof}

\begin{rmk} Although the connections of the above model with C*-algebras were not explored in this paper we remark, as previously pointed out in \cite{GoncalvesRoyer}, that the partial crossed product associated to the partial Bratteli-Vershik system, in the case of a simple well ordered diagram, is the "large" AF algebra (tail equivalence) used in the study of Cantor minimal systems and orbit equivalence (see for example \cite{GMPS}). Hence, we expect that our model will be useful in the study of general Bratelli-Vershik systems.
\end{rmk}

\section{Acknowledgements}

The authors would like to thank prof. Ruy Exel for valuable discussions regarding the present paper. In particular we should mention that prof. Ruy Exel has suggested that Theorem~\ref{maintheo} could follow from C*-algebra theory using results of his paper \cite{ExelAF}.

\addcontentsline{toc}{section}{References}

\end{document}